\newcommand{\R}{\mathbb{R}}
\newcommand{\hyp}{\mathbb{H}}
\numberwithin{equation}{section}
\theoremstyle{plain} 
\newtheorem{thm}[equation]{Theorem}
\newtheorem{prop}[equation]{Proposition}
\theoremstyle{definition}
\theoremstyle{remark}
\newcommand{\cddot}{\mathbin{{\cdot}\,{\cdot}}}
\begin{document}

\title{Infinite $\{3,7\}$-surface in $\hyp^3$}

\author{Dami Lee} \email{damilee@indiana.edu}
\author{Casey Zhao} \email{czhao4@uw.edu}
\address{}




\date{}


\maketitle


 \begin{abstract}
 Objects with large symmetry groups have been an interest for many mathematicians. A classical question in geometry is whether a surface with certain geometric features, such as completeness, curvature, etc..., can embed in $\mathbb{R}^3.$ In \cite{orig}, Lee constructs an infinite $\{3,7\}$-surface in $\R^3$ by gluing together prisms and antiprisms, in an attempt to find a periodic surface in $\R^3$ that is cover of Klein's quartic. While Lee's construction shows that such construction self-intersects in $\R^3$, it does not prove nor disprove the possibility of an embedding. In this paper, we explore a possible embedding of the genus three Klein's quartic, or its cover, in hyperbolic space.\end{abstract}

 \section{Main Result}
In this paper, we mimic the construction of an infinite $\{3,7\}$-surface in $\R^3$ shown in \cite{orig} and build the surface in the Poinc\'are ball, which we denote by $\hyp^3$. We observe the following result.

  \begin{thm} A hyperbolic polyhedral surface $\{3,7\}$ in $\hyp^3$ has no self-intersection if its side length is greater than some $\ell > 0.48.$ \end{thm}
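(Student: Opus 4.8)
The plan is to exploit the combinatorial regularity of the construction to reduce embeddedness to a finite, explicitly $\ell$-parametrized collection of inequalities in hyperbolic trigonometry. By construction (mimicking \cite{orig}), the $\{3,7\}$-surface $S_\ell\subset\hyp^3$ is a union of congruent hyperbolic triangular faces, each realized as a face of one of two rigid blocks — a hyperbolic prism and a hyperbolic antiprism over an equilateral triangle of side length $\ell$, glued to one another along common faces — and the whole configuration is invariant under a discrete group $\Gamma_\ell$ of isometries of $\hyp^3$ with only finitely many orbits of blocks. Since every face of $S_\ell$ lies in one of these blocks, it suffices to show that the blocks have pairwise disjoint interiors and meet only along entire common faces; because $\Gamma_\ell$ acts properly discontinuously with finitely many block-orbits, this in turn reduces to checking a fixed prism block and a fixed antiprism block against the finitely many $\Gamma_\ell$-translates that can come within one block-diameter of them.

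First I would record the metric data of the two blocks as functions of $\ell$. From the hyperbolic law of cosines the face angle of the equilateral triangle is $\alpha(\ell)=\arccos\!\big(\cosh\ell/(1+\cosh\ell)\big)$, which decreases from $\pi/3$ to $0$ as $\ell$ runs over $(0,\infty)$; from this one obtains closed-form expressions for the prism's two dihedral angles (along a lateral edge and along a triangle edge) and for the antiprism's dihedral angles, each a smooth, strictly decreasing function of $\ell$ — an instance of the general fact that enlarging a hyperbolic polyhedron of fixed combinatorial type shrinks all of its dihedral angles toward the values of the corresponding ideal polyhedron.

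Next, for each edge-type of the assembled complex I would list the cyclic sequence of blocks surrounding it and impose the no-overlap condition that the incident dihedral angles sum to at most $2\pi$ (equivalently, that the arcs they cut on the normal circle are embedded); at each vertex-type one likewise needs the spherical polygons that the incident blocks cut on a small sphere to assemble into an embedded region of that sphere, which — given the $\{3,7\}$ pattern — follows from the edge inequalities by a connectedness argument. Substituting the formulas of the previous step turns every such requirement into a monotone inequality in $\ell$; each fails only below a single critical side length, and the largest of these critical values is the threshold. The computation places that threshold just above $0.48$, so that all the inequalities hold simultaneously whenever $\ell>0.48$, which is the assertion of the theorem.

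The step I expect to be the real obstacle is upgrading these \emph{local} edge-and-vertex conditions to \emph{global} embeddedness: a priori two blocks that are far apart in the combinatorial complex could still be close in $\hyp^3$ and overlap. I would try to rule this out structurally — for instance by showing that the union of the blocks incident to a single vertex is star-shaped about that vertex, and that consecutive such stars overlap only along the prescribed common faces, so that no two blocks can collide without two neighboring blocks already doing so. Failing a clean argument of that kind, the fallback is a quantitative separation estimate: bound below the $\hyp^3$-distance from a fixed block to every non-neighboring $\Gamma_\ell$-translate in terms of the positive angular gaps guaranteed by the (strict, for $\ell>0.48$) edge inequalities, using convexity of the blocks and the exponential divergence of geodesics in $\hyp^3$. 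Making this last step airtight — and verifying that it does not force the threshold above $0.48$ — is where the genuine difficulty lies; the trigonometric monotonicity claims underlying the local inequalities should be comparatively routine.
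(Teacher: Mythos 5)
First, a caveat about the target: the paper does not prove this theorem deductively. The bound $0.48$ is explicitly stated to be ``achieved via experiment'' --- the authors compute eleven iterations of the gluing procedure, render the result in POV-Ray, and visually observe collisions below that side length and none above it. So there is no written proof to compare yours against, and any genuine proof must supply something the paper itself does not.

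That said, your proposal has a real gap, and it sits exactly where you say it does --- but the situation is worse than ``the hard step is local-to-global,'' because the local conditions you propose are vacuous and cannot produce the threshold. The blocks are glued only along their square faces, and the block adjacency structure is a tree (each prism meets three antiprisms, each antiprism meets two prisms); there is no cyclic sequence of blocks around an edge whose dihedral angles must sum to $2\pi$. Two blocks sharing a square face lie on opposite sides of the plane of that square, so adjacent blocks have disjoint interiors for \emph{every} $\ell>0$; indeed the Euclidean construction (the $\ell\to 0$ limit) is already an immersion, and its self-intersections --- like those of the hyperbolic surface for small $\ell$ --- occur between combinatorially distant blocks when the surface wraps around on itself. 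Hence the ``largest critical value of the local inequalities'' is $0$, not $0.48$, and the entire content of the theorem is the global separation statement for which you offer only a fallback sketch. That fallback also rests on an unjustified premise: the existence of a properly discontinuous $\Gamma_\ell$ with finitely many block orbits. The construction is equivariant under the symmetry group of the abstract complex, giving a holonomy homomorphism into $\mathrm{Isom}(\hyp^3)$, but discreteness of its image is essentially equivalent to the embeddedness being proved, so assuming it is circular, and without it the reduction to ``finitely many translates within one block-diameter'' is unavailable. A viable route would be a packing argument --- for instance, exhibiting disjoint convex regions (half-spaces bounded by the planes of the square faces) separating the branches of the tree for large $\ell$ --- followed by finitely many explicit checks; as written, the proposal neither proves the statement nor derives the number $0.48$.
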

 
 This lower bound is achieved via experiment. 
 \begin{figure}[h]
     \centering
     \includegraphics[width=0.63\linewidth]{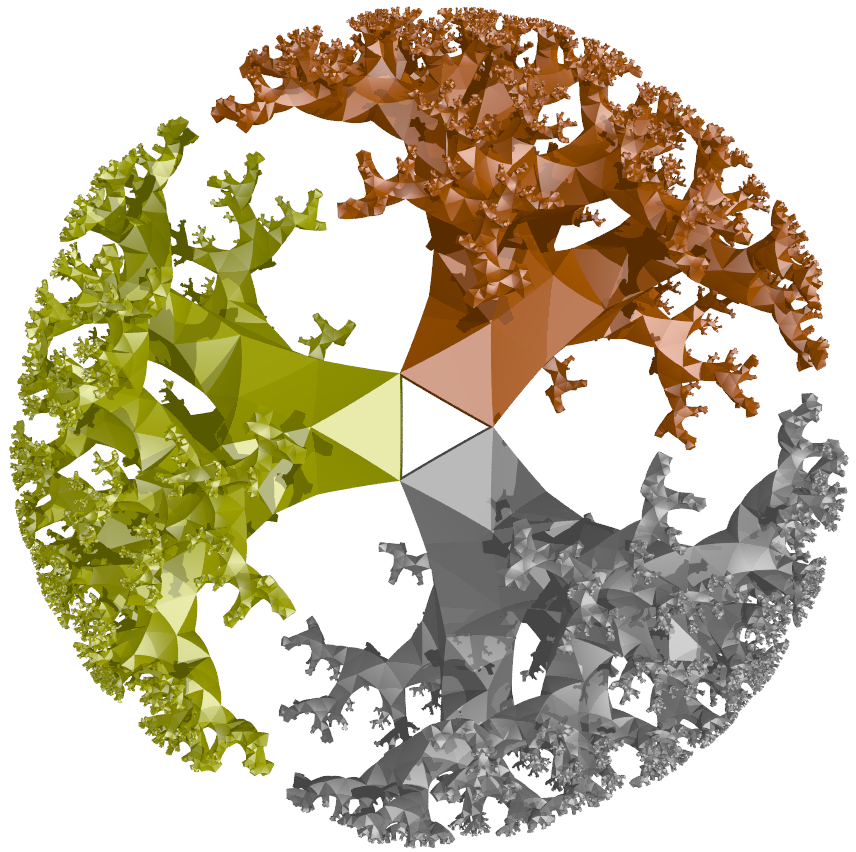}
     \caption{Surface with side length $s=0.53$}
     \label{fig:big}
 \end{figure}
 
The outline of this paper is as follows: \begin{itemize}
    \item In section~\ref{sec:intro}, we give a brief summary of the Euclidean $\{3,7\}$-surface from \cite{orig}.
    \item  In section~\ref{sec:methods}, we outline the algorithm for our experiment that produced our main result.
    \item In section~\ref{sec:results}, we display figures with various parameters.
 \end{itemize}
 
The authors would like to thank WXML (Washington eXperimental Mathematics Lab) organizers for the opportunity to run this project. The authors would also like to give credit to Jacob Ogden for helpful insights in proving Proposition~\ref{prop3.1} .    
 
 \section{Introduction}
 \label{sec:intro}

 In this section we summarize the work of \cite{orig}.

The Schl{\"a}fli symbol $\{p,q\}$ implies that a polyhedral surface is tiled by regular $p$-gons and is $q$-valent at every vertex. The key idea in \cite{orig} is to view a polyhedral surface as the boundary of a polyhedron obtained by gluing polyhedral solids. The solids used here are uniform prisms over triangles and uniform antiprisms over squares. By uniform, we mean right and equilateral. To obtain a boundary surface tiled exclusively by triangles, we glue prisms and antiprisms along their square sides. Start with a triangular prism, and glue the base of an antiprism to each of the three square sides of the prism. At this step, there are two different ways we can glue a prism to each square face of the antiprisms:
 \begin{figure}[h]
     \centering
     \includegraphics[width=0.7\linewidth]{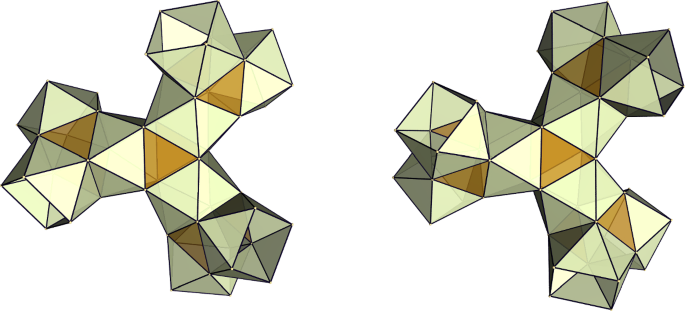}
     \caption{Twists in opposite directions, reprinted from \cite{orig}}
     \label{fig:twists}
 \end{figure}

We choose a direction of ``twisting'' (Figure~\ref{fig:twists}) and stick with this choice throughout the entire construction of the surface\footnote{In \cite{orig} Lee does not consider other gluing patterns of polyhedra where twist directions vary between steps.}. After just a few iterations, the surface self-intersects, hence can only be immersed in $\R^3$ but not embedded.
 
If a surface were to embed in $\R^3$ or furthermore be periodic in $\R^3,$ i.e., invariant under a lattice of translations, then one can consider the quotient of the periodic surface. Periodic surfaces have been studied in various fields, mathematically in Coxeter's work in regular polyhedra \cite{coxeter1937regular}, in the theory of infinite minimal surfaces \cite{schoen1970infinite}, and also experimentally in crystallography and polymer chemistry \cite{sheng}.

 In this paper, we replace the Euclidean polyhedral solids in \cite{orig} with hyperbolic polyhedral solids and use the same gluing pattern of polyhedra to construct the surface in $\hyp^3$ in an attempt to ``fix'' the self-intersecting property of the surface in $\R^3$. If the side length of all polyhedra is small, the resulting hyperbolic surface resembles the Euclidean surface and hence still self-intersects. However, we observe via experiment that if the side length is larger than some minimum length $\ell$, the surface does not self-intersect.

 \section{Methods}
 \label{sec:methods}
 To create our visualizations, we use a combination of Python and POV-Ray. We first calculate the Euclidean coordinates of each of the surface's vertices using Python. The data is then rewritten as code readable by the ray-tracing program, POV-Ray, which outputs the drawing of our surface.
 
 We start by finding the vertices of a uniform triangular prism and a uniform square antiprism with a given hyperbolic side length, such that their centers of mass are located at the origin of the Poincar\'e ball. It is easy to calculate the coordinates of their vertices in this location, since
 
 \begin{prop}
    The vertices of a convex uniform Euclidean polyhedron contained in, and centered at the origin of the Poinc\'are ball are also the vertices of a convex uniform hyperbolic polyhedron centered at the origin.\label{prop3.1}
 \end{prop}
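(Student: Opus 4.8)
The plan is to change models and let one elementary computation do the work. Step one: pass from the Poincar\'e ball to the Klein (projective) ball model of $\hyp^{3}$ via the standard isometry $\psi(x)=\frac{2x}{1+|x|^{2}}$, and note that on any centered sphere $\{|x|=t\}$ this $\psi$ acts as the dilation by the constant $\frac{2}{1+t^{2}}$. A convex uniform Euclidean polyhedron $P$ centered at the origin has all of its vertices on one such sphere $\{|x|=\rho\}$ with $\rho<1$, so if $V=V(P)$ then $\psi(V)=\mu V$ with $\mu=\frac{2}{1+\rho^{2}}$: a scaled copy of $P$, which still lies in the open unit ball because $\mu\rho=\frac{2\rho}{1+\rho^{2}}<1$ for $0<\rho<1$. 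Step two: in the Klein model hyperbolic geodesics and hyperbolic planes are straight, so the Euclidean polyhedron $\mu P$ is literally a hyperbolic polyhedron, and there Euclidean convexity is exactly hyperbolic convexity, so $\mu P$ is hyperbolically convex. Step three: pull $\mu P$ back by the hyperbolic isometry $\psi^{-1}$ to obtain a convex hyperbolic polyhedron in the Poincar\'e ball with vertex set $\psi^{-1}(\mu V)=V$ --- which is the content of the statement, once we know the result is \emph{uniform}.

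For uniformity I would work inside the Klein model, using that the Euclidean isometry group $O(3)$ acts there by hyperbolic isometries fixing the origin. The finite Euclidean symmetry group $G\le O(3)$ of $P$ is also the symmetry group of $\mu P$ and acts on $\mu V$ transitively, giving hyperbolic vertex-transitivity. For the faces: each face of $\mu P$ is a regular Euclidean polygon all of whose vertices lie at the common distance $\mu\rho$ from the origin, so the origin lies on that face's circum-axis; the rotation and the reflections of $\R^{3}$ about that axis belong to $O(3)$, preserve the face, and realize its dihedral symmetry group, so the face is a regular \emph{hyperbolic} polygon. Equivalently, every edge of $\mu P$ joins two points at distance $\mu\rho$ from the origin separated by the common Euclidean edge length, and the closed form
\[
  \cosh d_{\hyp}(u,v)=1+\frac{2\,|u-v|^{2}}{\bigl(1-|u|^{2}\bigr)\bigl(1-|v|^{2}\bigr)}
\]
for hyperbolic distance in the ball depends only on $|u|$, $|v|$ and $|u-v|$, so all edges have equal hyperbolic length. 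Hence $\mu P$, and therefore its isometric image in the Poincar\'e ball, is a convex uniform hyperbolic polyhedron.

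The step I expect to require the most care is not the metric algebra but the verification that ``uniform'' really transfers --- in particular that \emph{every} face (including the square faces of the triangular prism and the square bases of the square antiprism) is again regular, and that the hyperbolic convex hull of $V$ has vertex set exactly $V$, with no vertex absorbed into the hull of the others. Both are exactly what the dilation observation is for: in the Klein model the polyhedron reappears as an honest scaled copy of itself, so its combinatorics and its face-by-face regularity carry over verbatim. If one wanted to avoid the model change, the same two facts can be obtained directly in the Poincar\'e ball from two elementary properties --- diameters are geodesics, so half-spaces cut off by a plane through the origin are hyperbolically convex, and a hyperbolic geodesic always bows toward the origin relative to its Euclidean chord --- which together confine every geodesic segment between vertices to the Euclidean solid $P$, so that no vertex can lie in the hyperbolic hull of the remaining ones.
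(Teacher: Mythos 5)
Your argument is correct, but it takes a genuinely different and considerably more structural route than the paper's. The paper's entire proof is essentially your final display in disguise: all vertices of a centered uniform Euclidean polyhedron lie on one sphere $|x|=\rho$, and equal Euclidean radii plus equal Euclidean edge lengths force $\angle AOB=\angle COD$ for any two edges $AB$, $CD$; since rotations about the origin are hyperbolic isometries of the ball (equivalently, since the hyperbolic distance depends only on $|u|$, $|v|$, $|u-v|$), all hyperbolic edge lengths agree. The paper stops there, reading ``uniform'' as ``right and equilateral,'' and does not address convexity, face regularity, or whether the hyperbolic convex hull of $V$ really has $V$ as its vertex set. Your detour through the Klein model buys exactly those extra facts essentially for free: because $\psi$ restricted to the vertex sphere is a dilation, $\mu P$ is an honest scaled copy of $P$ whose flat faces lie in hyperbolic planes, so hyperbolic convexity, the identification of the hull's vertices, vertex-transitivity under $G\le O(3)$, and regularity of each face (including the square faces) all transfer verbatim. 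One small slip to fix: the displayed $\cosh$ formula is the Poincar\'e-ball distance, so it should be applied to the original vertices $V$ in the Poincar\'e ball rather than to $\mu V$ in the Klein model (where the correct formula is $\cosh d(u,v)=\bigl(1-\langle u,v\rangle\bigr)/\sqrt{(1-|u|^{2})(1-|v|^{2})}$); since the only property you invoke --- dependence on $|u|$, $|v|$, and $|u-v|$ alone --- holds in both models by $O(3)$-invariance, the conclusion is unaffected.
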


 \begin{proof}
    We verify that the hyperbolic distances between incident vertices of a convex uniform Euclidean polyhedron centered at the origin $O$ of the Poinc\'are ball are the same. Let $|\cddot|$ denote the Euclidean distance between two points, and $||\cddot||$ the hyperbolic distance. Pick two edges $AB$ and $CD$ from the polyhedron. Since the vertices of a uniform polyhedron lie on a sphere, we have
    \begin{align*}
        |AO|=|BO|&=|CO|=|DO|,\tag{1}\\
        ||AO||=||BO||&=||CO||=||DO||.\tag{2}
    \end{align*}
    
    To prove that $||AB||=||CD||$, it is enough to show that $\angle AOB=\angle COD$. But this follows from $(1)$ and $|AB|=|CD|$, hence the side lengths of the polyhedron are equal in the hyperbolic sense.
\end{proof}

 In particular, Proposition~\ref{prop3.1} holds for a uniform prism or antiprism centered at the origin. Thus, finding the vertices of a hyperbolic prism or antiprism at the origin is only a matter of scaling down a set of vertices of a Euclidean prism or antiprism.

To assemble our surface, we start with a copy of a prism or antiprism centered at the origin, and relocate it to its final location, using the sequence of hyperbolic isometries described in the proposition below. We follow the gluing pattern described in section~\ref{sec:intro}. When gluing a prism to an antiprism, we choose a counterclockwise twist at each step, as in the example on the left of Figure \ref{fig:twists}.
\begin{prop}
 An orientation-preserving isometry in $\hyp^3$ can be written as a composition of rotations about the origin and plane reflections. 
\end{prop}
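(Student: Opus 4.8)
The plan is to reduce an arbitrary orientation-preserving isometry to one that fixes the center $O$ of the Poincar\'e ball, and then to treat that center-fixing part and the ``translation'' part separately. Given $\phi \in \mathrm{Isom}^+(\hyp^3)$, set $q = \phi(O)$. If $q = O$ we are in the base case discussed below; otherwise let $\gamma$ be the geodesic through $O$ and $q$, and let $\tau$ be the hyperbolic translation along $\gamma$ carrying $O$ to $q$. The first key step is the classical fact that such a translation by hyperbolic distance $d = d_{\hyp}(O,q)$ factors as $\tau = R_{\Pi_2}\circ R_{\Pi_1}$, the composition of the reflections across the two totally geodesic planes perpendicular to $\gamma$ that lie a hyperbolic distance $d/2$ apart; here we are free to take $\Pi_1$ to be the plane through $O$ perpendicular to $\gamma$ (a Euclidean plane through the center of the ball) and $\Pi_2$ the geodesic plane perpendicular to $\gamma$ at the hyperbolic midpoint of $[O,q]$. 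Both $R_{\Pi_1}$ and $R_{\Pi_2}$ are plane reflections in the required sense.

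The second step is the base case: an isometry of the Poincar\'e ball that fixes $O$ is the restriction of a Euclidean orthogonal transformation $A\in O(3)$. This holds because, in the ball model, the hyperbolic metric at $O$ is a positive scalar multiple of the Euclidean one, so the differential $d\phi_O$ preserves Euclidean lengths and angles up to scale and hence lies in $O(3)$; and a Riemannian isometry of a connected manifold is determined by its value and differential at a single point, so $\phi$ must agree with the linear map $A = d\phi_O$ throughout the ball. If $\phi$ moreover preserves orientation then $A\in SO(3)$, that is, $\phi$ is a rotation about $O$. (If instead $A\in O(3)\setminus SO(3)$, then $A$ is itself a composition of a rotation about $O$ with a single plane reflection through $O$.)

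Combining the two steps, set $\psi = \tau^{-1}\circ\phi$. Since $\tau(O) = q = \phi(O)$, the isometry $\psi$ fixes $O$; and since $\tau$ (a product of two plane reflections) and $\phi$ both preserve orientation, so does $\psi$, whence $\psi = R$ is a rotation about $O$ by the base case. Therefore
$$\phi = \tau \circ R = R_{\Pi_2}\circ R_{\Pi_1}\circ R,$$
a composition of two plane reflections and one rotation about the origin, proving the claim (the case $q=O$ being simply $\phi = R$).

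I expect the main obstacle to be a clean justification of the base case --- that a center-fixing isometry of the ball is genuinely a linear orthogonal map --- since this is the point where one must invoke the structure of $\mathrm{Isom}(\hyp^3)$, for instance via its identification with the Möbius transformations preserving the unit ball together with the rigidity statement that an isometry is pinned down by a point and a frame there. The factorization of a translation into two reflections across perpendicular geodesic planes, and the fact that a product of evenly many plane reflections is orientation-preserving, are then routine; the only subtlety to flag is that $\Pi_2$ is an honest totally geodesic hyperbolic plane (a sphere orthogonal to $\partial\hyp^3$, not a Euclidean plane through $O$), so \emph{plane reflection} must be understood as reflection across a totally geodesic hyperbolic plane.
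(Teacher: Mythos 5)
Your proof is correct, and it rests on the same underlying idea as the paper's: factor the isometry into plane reflections that carry the origin to its image point, together with a rotation about the origin to fix up the rest. The difference is in execution and rigor. The paper's argument is purely constructive and illustrated on a single example: it sandwiches the rotation between reflections across the perpendicular bisector planes of the segments joining a marked source vertex and a marked target vertex to the origin (net isometry $R_M\circ\rho\circ R_N$), and it never justifies the key point that the part of the isometry stabilizing the origin is a rotation. Your write-up supplies exactly that missing base case --- the differential at $O$ is orthogonal because the Poincar\'e metric at the center is a scalar multiple of the Euclidean one, and an isometry of a connected Riemannian manifold is determined by its value and differential at one point --- and replaces the bisector trick with the standard factorization of a geodesic translation into two reflections across geodesic planes perpendicular to its axis. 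The two decompositions differ only in bookkeeping; yours is the more complete proof, while the paper's version doubles as the algorithm actually implemented in the code. Your closing remark is also well taken: one of the two planes in the translation factorization is a sphere orthogonal to the boundary rather than a Euclidean plane through $O$, so ``plane reflection'' must indeed be read as reflection across a totally geodesic hyperbolic plane, which is how the paper uses the term as well.
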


\begin{proof}
 The idea of the proof is equivalent to the idea that an isometry of $\R^3$ is a composition of an element of $\text{SO}(3)$ and an arbitrary translation. While the proof holds for any object in $\hyp^3,$ for the purpose of this paper, we illustrate our proof specifically with uniform antiprisms over squares.\vfill\hfill\pagebreak
 
 Figure~\ref{fig:iso} outlines how we can move the antiprism labeled $1$, call it $AP1$, so that the square base of $AP1$ facing towards us is mapped to an arbitrary target square $a$. In particular, we want each of the three colored vertices marked on the base of $AP1$ to map to its correspondingly colored vertex on $a$. Specifying these three vertices determines where $AP4$ lies with respect to $a$; in this example, we want $a$ to be the base of $AP4$ facing towards us rather than the one facing away.\medskip 
 
 \begin{figure}
    \centering
    \includegraphics[width=0.75\linewidth]{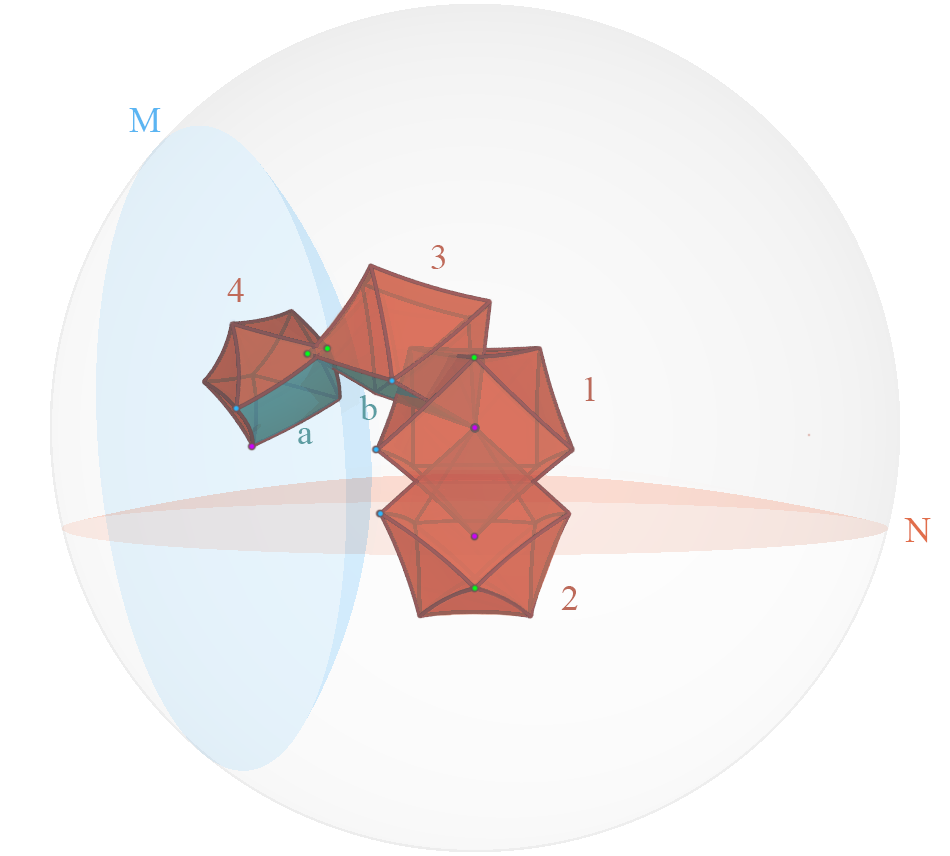}
    \caption{Sequence of isometries}
    \label{fig:iso}
\end{figure}

\noindent Step 1: Calculate the perpendicular bisector $N$ between the purple vertex of $AP1$ and the origin, and reflect $AP1$ over $N$ to get $AP2$. Note how the reflection maps the purple vertex of $AP1$ to the origin.\medskip

\noindent Step 2: Similarly, calculate the perpendicular bisector $M$ between the purple vertex of square $a$ and the origin, and reflect $a$ over $M$ to get square $b$. Again, note how the reflection maps the purple vertex of $a$ to the origin. \medskip

\noindent Step 3: Since the purple vertices of square $b$ and $AP2$ are both located at the origin, we can rotate $AP2$ about the origin so that the blue and green vertices of $AP2$ align with those of the square $b$. This gives us $AP3$.\medskip

\noindent Step 4: Finally, reflect $AP3$ over $M$ to get our final result, $AP4$.\medskip

We can carry out a similar procedure for a triangular prism, or in general, for any other polyhedron with any given initial and final locations.
\end{proof}

After gluing down a prism or antiprism, we collect the vertices $p,q,r$ of each new triangular face. Then, we find the hyperbolic plane passing through $p,q$ and $r$, and ``cut'' $\triangle pqr$ from the plane, using POV-Ray's CSG intersection.

 \section{Results}
 \label{sec:results}
Figure~\ref{fig:1} shows the ``top'' and ``side'' views of the surface for varying side lengths $s$ after 11 iterations\footnote{Due to time and resource limitations, we were only able to iterate the gluing procedure eleven times, where a single iteration consists of gluing a prism to each open square face, and then two antiprisms to each prism. Rendering the animations on our website \cite{web} was computationally intensive, so in those we opted for flat face approximations instead of spherical ones.}:
 
 \begin{figure}[h]
	\centering
	\begin{subfigure}[t]{0.35\linewidth}
		\centering
		\includegraphics[width=\linewidth]{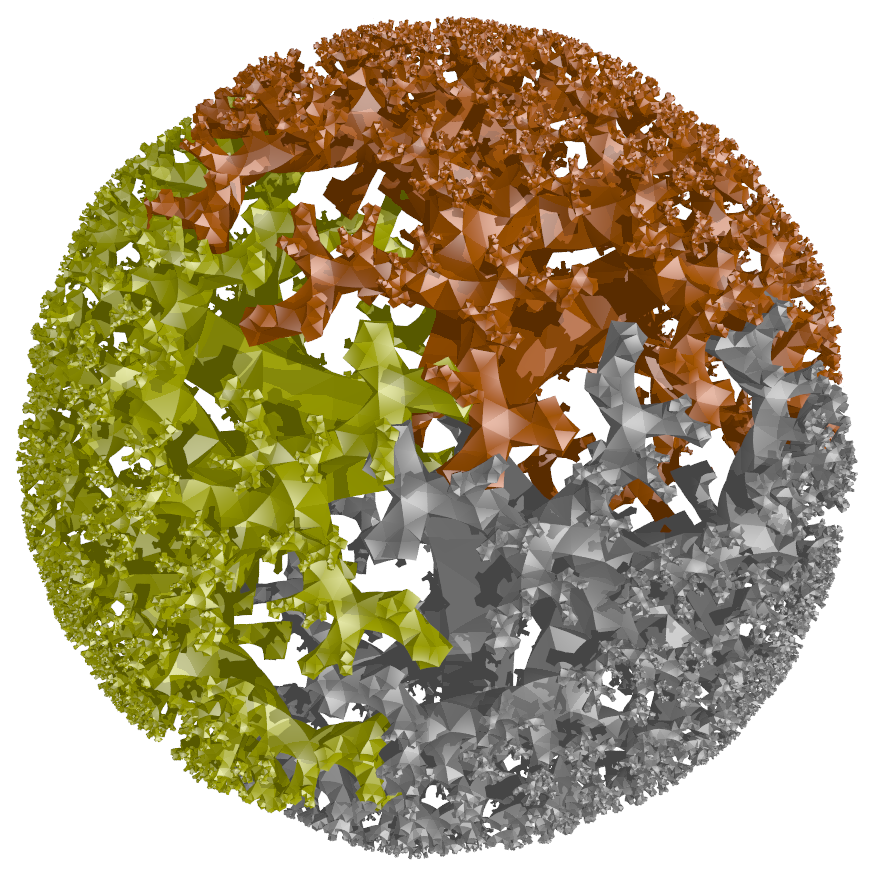}
		\caption{ $s=0.48$, top view}\label{fig:a}		
	\end{subfigure}
	\quad
	\begin{subfigure}[t]{0.35\linewidth}
		\centering
		\includegraphics[width=\linewidth]{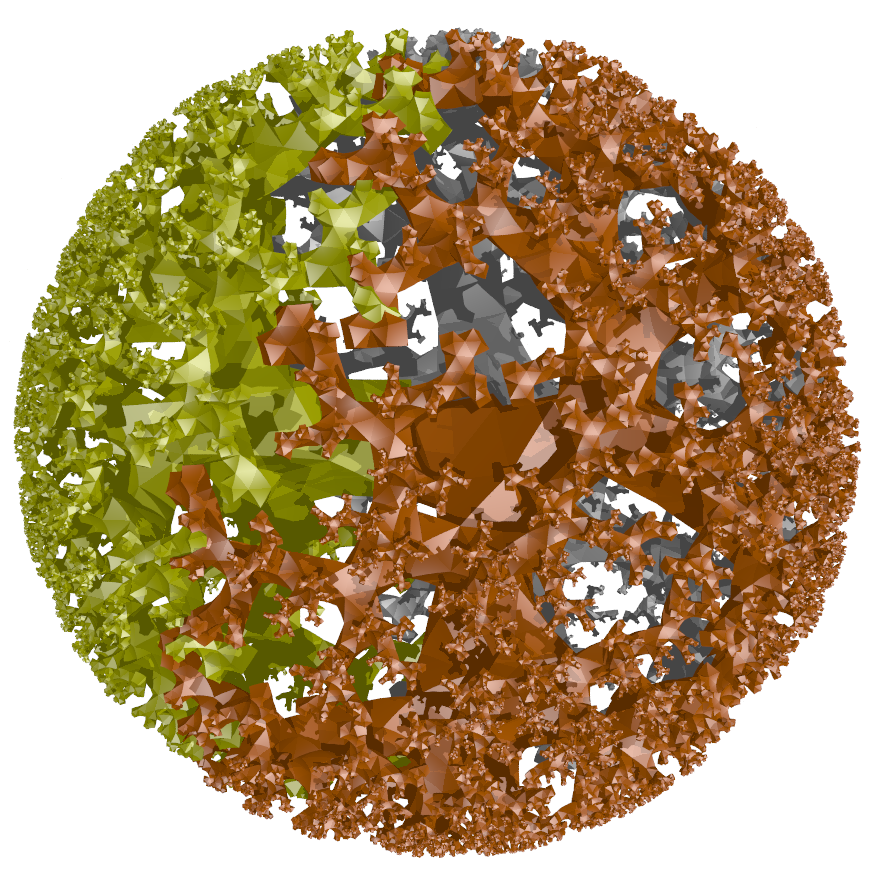}
		\caption{ $s=0.48$, side view}\label{fig:b}	
	\end{subfigure}\\
	\medskip
	\begin{subfigure}[t]{0.35\linewidth}
		\centering
		\includegraphics[width=\linewidth]{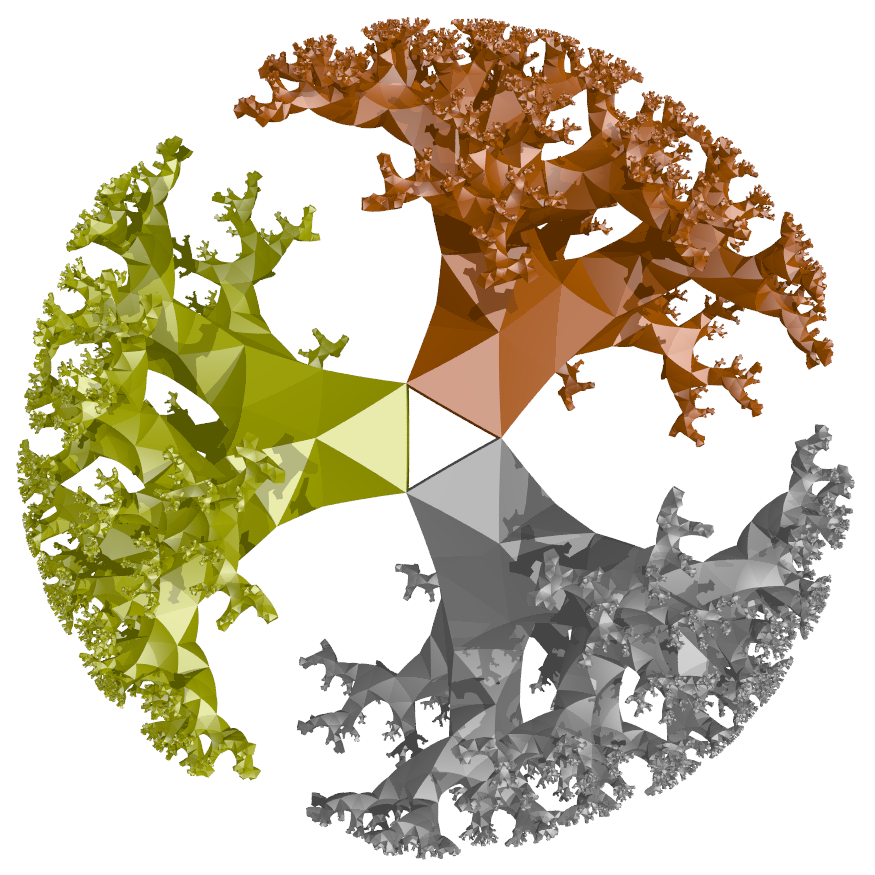}
		\caption{ $s=0.54$, top view}\label{fig:c}		
	\end{subfigure}
	\quad
	\begin{subfigure}[t]{0.35\linewidth}
		\centering
		\includegraphics[width=\linewidth]{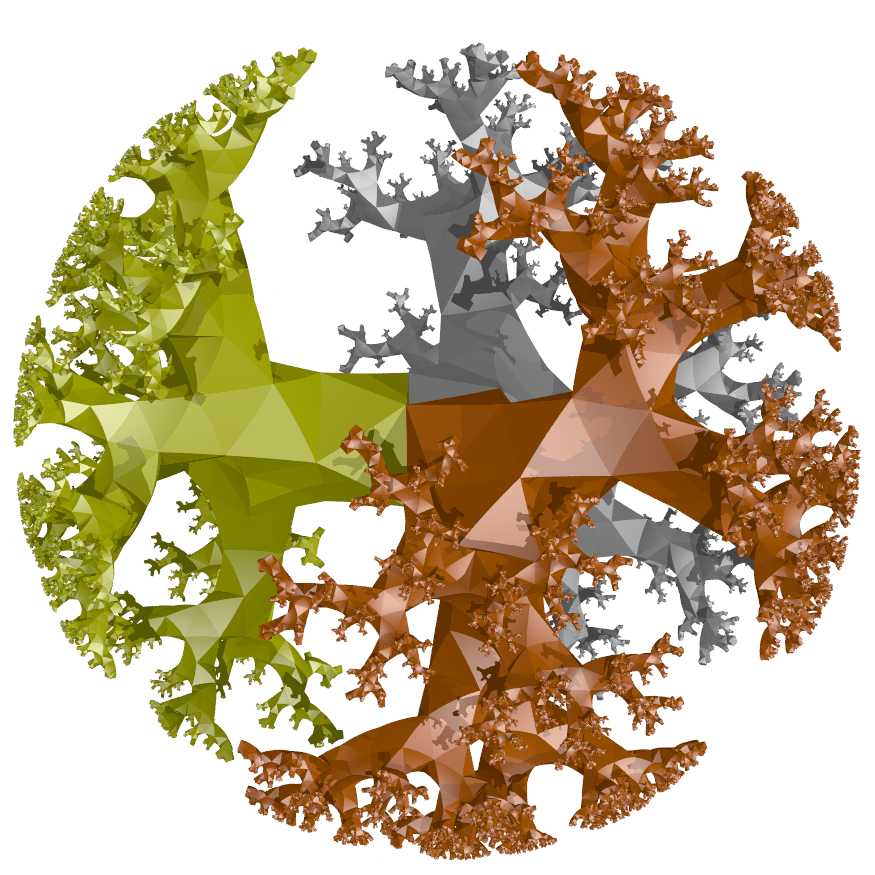}
		\caption{ $s=0.54$, side view}\label{fig:d}	
	\end{subfigure}\\
	\medskip
	\begin{subfigure}[t]{0.35\linewidth}
		\centering
		\includegraphics[width=\linewidth]{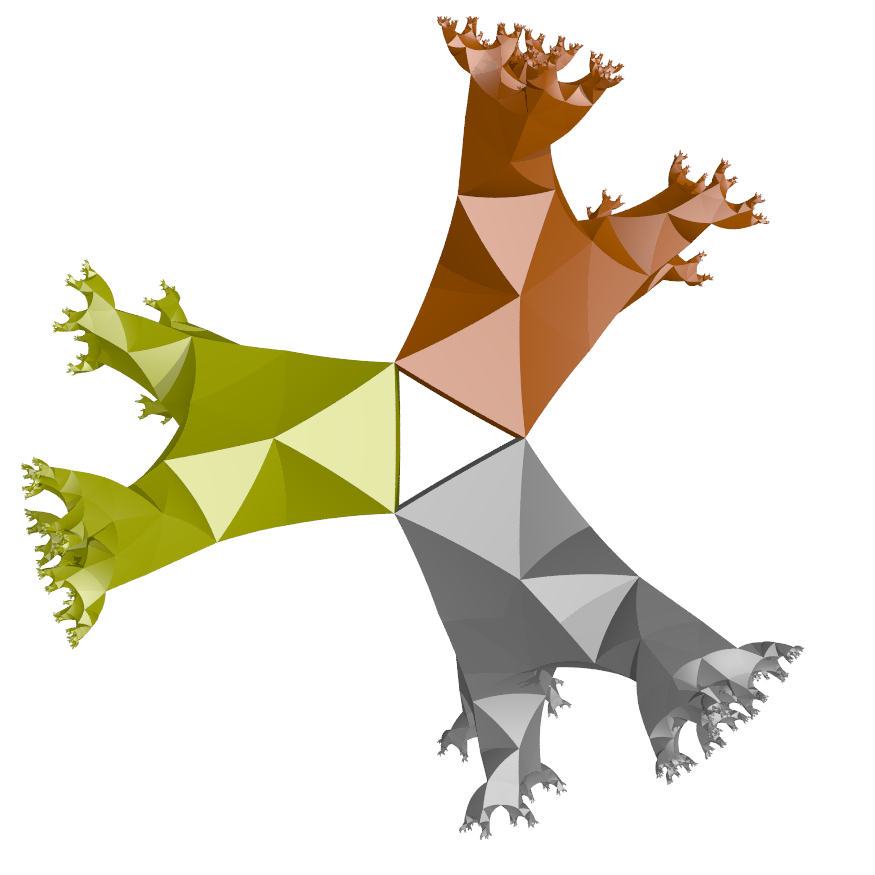}
		\caption{ $s=0.75$, top view}\label{fig:e}		
	\end{subfigure}
	\quad
	\begin{subfigure}[t]{0.35\linewidth}
		\centering
		\includegraphics[width=\linewidth]{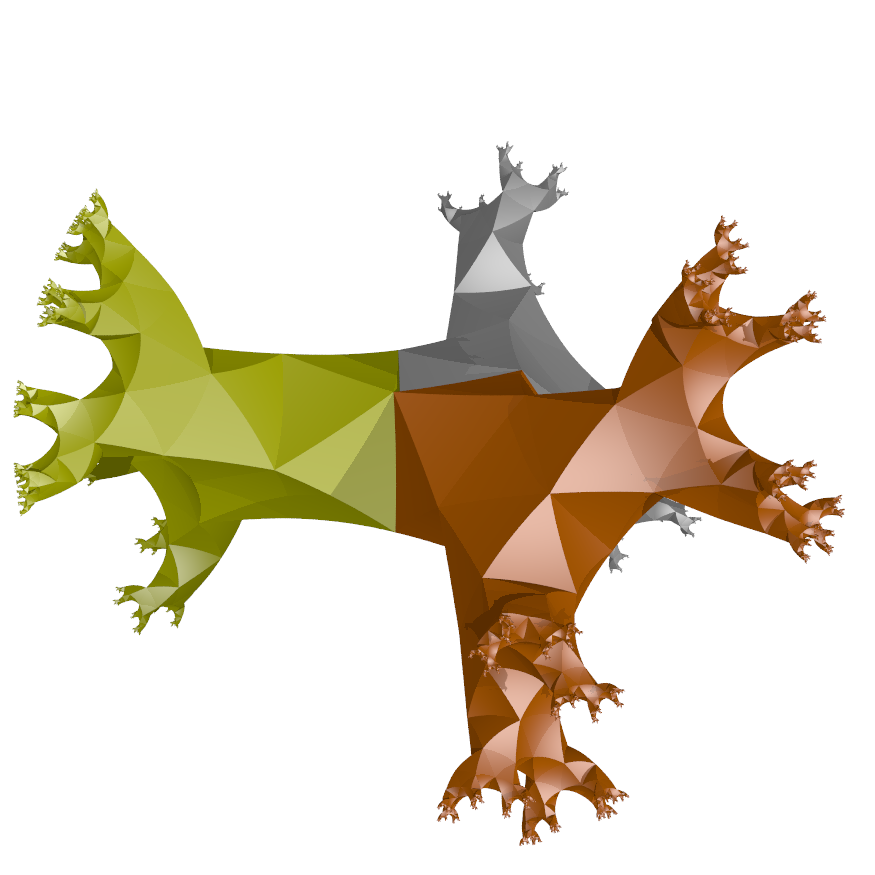}
		\caption{ $s=0.75$, side view}\label{fig:f}	
	\end{subfigure}
	\caption{Surfaces with varying side length.}\label{fig:1}
\end{figure}
See \cite{web} for full-size images and animations, and \cite{code} for the code.


\begin{bibdiv}
  \begin{biblist}


\bib{coxeter1937regular}{article}{
	Author = {H.S.M. Coxeter},
	Journal = {Proceedings of the London Mathematical Society},
	Number = {2},
	Pages = {33-62},
	Title = {Regular skew polyhedra in three and four dimensions, and their topological analogues},
	Volume = {43},
	Year = {1937}}

 \bib{orig}{article}{
   author={D. Lee},
   title={An infinite $\{3,7\}$-surface},
   journal={Geometriae Dedicata},
   year={2022},
   volume={216},
   pages={42}
 }
 
 \bib{sheng}{article}{
  author = {Sheng, Q., and V. Elser}, 
  title = {Quasicrystalline minimal surfaces},
  journal = {Physical Review B (Condensed Matter)},
  volume = {49},
  issue = {14},
  pages = {9977--9980},
  year = {1994},
  month = {Apr},
  publisher = {American Physical Society},
  doi = {10.1103/PhysRevB.49.9977},
  url = {https://link.aps.org/doi/10.1103/PhysRevB.49.9977}
}

 \bib{schoen1970infinite}{techreport}{
     title = {Infinite periodic minimal surfaces without self-intersections},
     author = {A.H. Schoen},
     year = {1970},
     institution = {National Aeronautics and Space Administration}}

 \bib{web}{misc}{
   author={Zhao, C.},
   title={dogbone in hyperbolic space},
   note = 
   {\url{https://students.washington.edu/czhao4/dogbone-hyperbolic.html}},
   year={2022}
 }
 \bib{code}{misc}{
   author={Zhao, C.},
   title={dogbone-coordinates},
   note = 
   {\url{https://github.com/caseyzhao19/dogbone-coodinates}},
   year={2022}
 }
  \end{biblist}
\end{bibdiv}
\end{document}